\newcommand{\R}{\mathbb{R}}
\newcommand{\C}{\mathbb{C}}
\newcommand{\Z}{\mathbb{Z}}
\newcommand{\N}{\mathbb{N}}
\newcommand{\Q}{\mathbb{Q}}
\newtheorem{Theo}{Theorem}
\newtheorem{coro}{Corollary}
\newtheorem{Lem}{Lemma}
\newtheorem{Con}{Conjecture}
\theoremstyle{definition}
\newtheorem{ex}{Example}
\title[Essential singularities]{Essential singularities of
  Euler products}
\author{Gautami Bhowmik}
\author{Jan-Christoph Schlage-Puchta}
\begin{document}
\renewcommand{\subjclass}{\textup{2010} Mathematics Subject Classification}
\renewcommand{\thefootnote}{}
\footnote{\subjclass{ 30B50, 11M41, 30B40, 20F69, 11G50 }} 
\keywords {Dirichlet  series, Euler product, singularities, natural
  boundary, zeta functions of groups}
\begin{abstract} We classify singularities of Dirichlet series having 
Euler products which are rational functions of $p$ and $p^{-s}$ for $p$ a prime
number and give examples of natural boundaries from zeta functions of
groups and height zeta functions.
\end{abstract}
\maketitle

\section{Introduction and results}

Many Dirichlet-series occurring in practice satisfy an Euler-product,
and if they do so, the Euler-product is often the easiest way to
access the series. Therefore, it is important to deduce information on
the series from the Euler-product representation. One of the most
important applications of Dirichlet-series, going back to Riemann, is
the asymptotic estimation of the sum of its coefficients via Perron's
formula, that is, the use of the equation
\[
\sum_{n\leq x} a_n = \frac{1}{2\pi i}\int\limits_{c-i\infty}^{c+i\infty}
\Big(\sum_{n\geq 1}\frac{a_n}{n^s}\Big)\frac{x^s}{s}\;ds.
\]
To use this
relation, one usually shifts the path of integration to the left,
thereby reducing the contribution of the term $x^s$. This becomes 
possible only if the function $D(s)=\sum\frac{a_n}{n^s}$ is holomorphic on the
new path and therefore the question of continuation of Dirichlet-series
beyond their domain of absolute convergence is a central issue in this
theory. In fact, the importance of the Riemann hypothesis stems from
the fact that it would allow us to move the path of integration for
$D(s)=\frac{\zeta'}{\zeta}(s)$ to the line $1/2+\epsilon$ without
meeting any singularity besides the obvious pole at 1. 

Estermann\cite{Est} appears to be the first to address this problem. He showed
that for an integer valued polynomial $W(x)$
with $W(0)=1$ the
Dirichlet-series $D(s)=\prod_p W(p^{-s})$ can either be written as a
finite product of the form $\prod_{\nu\leq N}\zeta(\nu s)^{c_\nu}$ for
certain integers $c_\nu$, and is therefore meromorphically continuable
to the whole complex plane, or is continuable to the half-plane
$\Re\,s>0$. In the latter case the line $\Re\,s=0$ is the natural
boundary of 
the Dirichlet-series.  
The strategy of his proof was to show that every point on the line
$\Re\,s=0$ is an accumulation point of poles or zeros of $D$. Note
that $\zeta$, the Riemann-zeta function itself, does not fall among the cases under
consideration, since  $W(X)=(1-X)^{-1}$ is a rational
function. Dahlquist\cite{Dahl} generalized Estermann's work allowing
$W$ to be a function holomorphic in the unit circle with the exception
of isolated singularities and in particular covering the case that $W$
be rational. This
method of proof was extended to much greater generality, interest
being sparked by $\zeta$-functions of nilpotent groups
introduced by Grunewald, Segal and Smith\cite{GSS} as well as height zeta
functions\cite{2nob}. Functions arising
in these contexts are often of the form $D(s)=\prod W(p, p^{-s})$ for an
integral polynomial $W$. Du Sautoy and Grunewald\cite{ghost} gave a
criterion for such a function to have a natural boundary which, in a
probabilistic sense, applies to almost all polynomials. 
Again, it is shown that every point on
the presumed boundary is an accumulation point of zeros or
poles.  The following conjecture, see for example \cite[\bf 1.11]{book}\cite[\bf 1.4]{ghost}, 
is believed to be true.
\begin{Con}
\label{Con:Main}
Let $W(x, y)=\sum_{n,m} a_{n,m} x^n y^m$ be an integral polynomial
with $W(x, 0) = 1$. Then $D(s)=\prod_p W(p, p^{-s})$ is
meromorphically continuable to the whole complex plane if and if only
if it is a finite product of Riemann $\zeta$-functions. Moreover, in
the latter case if $\beta=\max\{\frac{n}{m}: m\ge 1, a_{n,m}\neq 0\}$, then
$\Re\,s=\beta$ is the natural boundary of $D$.
\end{Con}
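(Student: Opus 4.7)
The ``if'' direction is immediate: a finite product of Riemann zeta functions is meromorphic on $\C$. For the converse, the plan is to analyze $\log D(s)$ through the formal expansion of $\log W$ combined with the meromorphic continuation of the prime zeta function $P(s)=\sum_p p^{-s}$.

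Since $W(x,0)=1$, the formal series $\log W(x,y)$ has the shape
\[
\log W(x,y)=\sum_{l\ge 1}Q_l(x)\,y^l,\qquad Q_l(x)=\sum_k q_{l,k}x^k\in\Q[x].
\]
Substituting $(x,y)=(p,p^{-s})$ and summing over $p$ yields, in the region of absolute convergence,
\[
\log D(s)=\sum_{l\ge 1}\sum_p Q_l(p)\,p^{-ls}=\sum_{l,k}q_{l,k}\,P(ls-k).
\]
A direct calculation with the Newton polygon of $W$ shows $\max\{k/l:q_{l,k}\neq 0\}=\beta$. Using Landau's identity $P(s)=\sum_{n\ge 1}\mu(n)n^{-1}\log\zeta(ns)$, which continues $P$ meromorphically to $\Re s>0$ with $\Re s=0$ a natural boundary, we rewrite
\[
\log D(s)=\sum_{l,k,n\ge 1}\frac{q_{l,k}\,\mu(n)}{n}\log\zeta\bigl(n(ls-k)\bigr).
\]

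Each term $\log\zeta(n(ls-k))$ carries a logarithmic singularity at $s=(k+1/n)/l$ (from the pole of $\zeta$) together with singularities at $s=(k+\rho/n)/l$ for every nontrivial zero $\rho$ of $\zeta$; these accumulate on the line $\Re s=k/l$ from the right as $n\to\infty$. Since $D$ is meromorphic if and only if $\log D$ has a discrete set of logarithmic singularities, the double sum above must admit massive cancellation if $D$ is to extend past $\Re s=\beta$. The central claim would be that such cancellation is possible only when the sum collapses to a finite $\Z$-linear combination of $\log\zeta(as+b)$, which is exactly the condition that $D$ be a finite product of Riemann zeta functions. The ``moreover'' assertion would then follow because in the non-collapsing case a density argument exhibits an uncanceled dense sequence of zeros or poles of $D$ on $\Re s=\beta$.

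The main obstacle is precisely this cancellation analysis: ruling out pathological identities among the functions $\log\zeta(n(ls-k))$ appears to require a transcendence-type statement asserting that the only $\Q$-linear dependencies among $\{\log\zeta(as+b):a\in\N,b\in\Q\}$ arise from Möbius-type relations. Du Sautoy and Grunewald have established a probabilistic version of this for generic $W$, but an unconditional argument is not available. A parallel approach via the Puiseux factorization $W(x,y)=\prod_j(1-\alpha_j(x)y)$ with $\alpha_j(x)=c_j x^{\beta_j}(1+O(x^{-\delta}))$ reduces matters to showing that each $\alpha_j$ must be a monomial $x^{\beta_j}$ with $\beta_j\in\N$, but meets the same cancellation obstruction when one tries to exclude sub-leading Puiseux contributions from conspiring across different $j$.
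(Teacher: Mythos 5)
You are attempting to prove a statement that the paper itself presents only as Conjecture~\ref{Con:Main}: it is open, the paper offers no proof, and in fact the paper's main point (Theorem~\ref{thm:class}) is that arguments of exactly the kind you sketch --- tracking singularities of $\log D$ coming from poles and zeros of shifted $\zeta$-factors and then exhibiting an uncancelled accumulating family --- cannot prove it in general. Your own text concedes the decisive gap: the ``central claim'' that the only way the singularities of $\sum_{l,k,n}\frac{q_{l,k}\mu(n)}{n}\log\zeta(n(ls-k))$ can cancel is for the sum to collapse to a finite combination of $\log\zeta(as+b)$ is not a lemma you can prove; it is essentially a restatement of the conjecture, and the transcendence-type independence statement you invoke for $\{\log\zeta(as+b)\}$ is unavailable. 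So the proposal is a heuristic reduction, not a proof.

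Moreover, the final ``density argument exhibits an uncanceled dense sequence of zeros or poles on $\Re s=\beta$'' is provably not available in general, which is the content of case (5) of Theorem~\ref{thm:class}. A concrete counterexample to that strategy is the Jacobsthal example $g(s)=\prod_p\bigl(1+p^{-s}-p^{2-s}\bigr)$, i.e.\ $W(X,Y)=1+Y-X^2Y$ with $\beta=2$: here $g(s)=\zeta(s-2)D^*(s)$ with $D^*$ holomorphic and non-vanishing in $\Re s>2$, so no zeros or poles of $D$ accumulate anywhere on $\Re s=\beta$, and in the cyclotomic expansion only finitely many factors produce singularities (poles or branch points at $(n+\rho)/m$) to the right of $\beta$; hence neither Estermann's method nor your $\log\zeta$-singularity accounting can produce a dense set of obstructions on the line, even though the conjecture predicts $\Re s=\beta$ is the natural boundary. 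Any genuine proof of the conjecture must detect singularities that are not witnessed by accumulating zeros, poles, or $\zeta$-zero branch points from the right, and no such technique is currently known; this is precisely why the paper classifies obstructing points instead of proving the conjecture.
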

In this paper we show that any refinement of Estermann's method is bound to
fail to prove this conjecture. 

If $W(X, Y)$ is a rational function, we expand $W$ into a power
series $W(X, Y)=\sum_{n, m\geq 0} a_{n, m} X^nY^m$, and define
$\alpha=\sup\{\frac{n+1}{m}: m\ge 1,a_{n,m}\neq 0\}$,
$\beta=\sup\{\frac{n}{m}: m\ge 1,a_{n,m}\neq 0\}$. It is easy to see that 
the supremum is actually attained, and that the function
$\tilde{W}=1+\sum_{\frac{n}{m}=\beta} a_{n, m} X^nY^m$ is again a
rational function. We call
$\tilde{W}$ the main part of $W$, since only 
$\tilde{W}$ is responsible for the convergence of the product
$D(s)$. For $W$ a polynomial $\tilde{W}$ was called the ghost of $W$
in \cite{ghost}. A rational function $W$ is called cyclotomic if it
can be written as the product of cyclotomic polynomials and their
inverses. 

We define an obstructing point $z$ to be a complex number with
$\Re\,z=\beta$, such that there exists a sequence of complex numbers
$z_i$, $\Re\,z_i>\beta$, $z_i\rightarrow z$, such that $D$ has a pole
or a zero in $z_i$ for all $i$. Obviously, each obstructing point is
an essential singularity for $D$, the converse not being true in
general. 

Our main result is the following.

\begin{Theo}
\label{thm:class}
Let $W(X, Y)$ be a rational function, which can be written as
$\frac{P(X, Y)}{Q(X, Y)}$, where $P, Q\in\Z[X, Y]$ satisfy $P(X,
0)=Q(X, 0)=1$. Define $a_{n, m}, \beta, \tilde{W}$ and $D$ as
above. Then the product representation of $D$ converges in the
half-plane $\Re\;s>\alpha$, $D$ 
can meromorphically continued into the half-plane $\Re\;s>\beta$, and
precisely one of the following holds true.
\begin{enumerate}
\item $W$ is cyclotomic and once its unitary factors are removed,
$W=\tilde{W}$ ; in this case $D$ is a
  finite product of Riemann $\zeta$-functions;
\item $\tilde{W}$ is not cyclotomic; in this case every point of the
  line $\Re\,s =\beta$ is an obstructing point;
\item\label{type2} 
$W\neq\tilde{W}$, $\tilde{W}$ is cyclotomic and there are
  infinitely many pairs $n,m$ with $a_{n,m}\neq 0$ and
  $\frac{n}{m}<\beta<\frac{n+1}{m}$; in this case $\beta$ is an
  obstructing point;
\item $W\neq\tilde{W}$, $\tilde{W}$ is cyclotomic, there are only
  finitely many pairs $n,m$ with $a_{n,m}\neq 0$ and
  $\frac{n}{m}<\beta<\frac{n+1}{m}$, but there are infinitely many
  primes $p$ such that the equation $W(p, p^{-s})=0$ has a solution
  $s_0$ with $\Re\,s_0>\beta$; in this case every point of the line
  $\Re\,s=\beta$ is an obstructing point;
\item None of the above; in this case no point on the line
  $\Re\,s=\beta$ is an obstructing point.
\end{enumerate}
\end{Theo}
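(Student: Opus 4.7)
My plan proceeds in three stages: first establish convergence and meromorphic continuation, then reduce the obstructing-point analysis to the leading-order Euler product together with zeros of individual factors, and finally handle the five cases.

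For convergence in $\Re\,s>\alpha$, the estimate $|W(p,p^{-s})-1|\ll p^{\alpha-\Re\,s-\delta}$ from the power-series expansion makes $\sum_p\log W(p,p^{-s})$ absolutely convergent. For meromorphic continuation to $\Re\,s>\beta$, note that because the supremum defining $\beta$ is attained, $\beta=a/b$ is rational and $\tilde W(X,Y)=f(X^{a}Y^{b})$ for a single-variable rational function $f$ with $f(0)=1$. Writing $W=\tilde W\cdot(W/\tilde W)$ and iterating, I strip off successive ``main parts'': each stripped piece is either cyclotomic---giving meromorphic continuation to the whole plane via M\"obius inversion on cyclotomic polynomials, expressing $\prod_p\Phi_n(p^{-s})$ in terms of $\zeta(ns)$'s---or is non-cyclotomic with its own Dahlquist-type natural boundary strictly to the left of the current working half-plane. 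After finitely many steps the residual factor converges absolutely in $\Re\,s>\beta$.

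Having done this, the obstructing points of $D$ on $\Re\,s=\beta$ come from two sources: accumulation of zeros or poles of the leading product $D_{1}(s):=\prod_p\tilde W(p,p^{-s})$, and accumulation, from the right, of the zeros $s_{0}$ of individual Euler factors $W(p,p^{-s})=0$ with $\Re\,s_{0}>\beta$. By Dahlquist applied to $f$ via the substitution $s'=b(s-\beta)$, the first source obstructs every point of $\Re\,s=\beta$ when $\tilde W$ is non-cyclotomic, and contributes nothing when $\tilde W$ is cyclotomic. Case~(1) then collapses to a direct computation---both $W$ and $\tilde W$ are cyclotomic, and M\"obius inversion writes $D$ as a finite product of $\zeta(\nu s-\mu)$---while case~(2) is exactly the non-cyclotomic horn of Dahlquist.

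The cyclotomic-$\tilde W$ cases (3), (4), (5) then hinge entirely on the second source. In (3), the gap pairs $(n,m)$ with $n/m<\beta<(n+1)/m$ provide, via a Hurwitz/implicit-function argument applied to the dominant subsum of $W(p,Y)$, zeros $s_{0}(p)$ whose real parts tend to $\beta$ from the right along an infinite subsequence of primes; however, their imaginary parts stay bounded, so only the point $s=\beta$ itself is obstructing. In (4), only finitely many gap pairs occur, preventing the mechanism of (3); but infinitely many primes still produce zeros of $W(p,p^{-s})$ with $\Re\,s_{0}>\beta$ and real part bounded away from $\beta$, and the $2\pi i/\log p$-periodicity together with the $\Q$-linear independence of the $\log p$ yields equidistribution in the imaginary direction, so every point of $\Re\,s=\beta$ becomes an obstructing point. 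Case~(5) is handled by exclusion: when none of the above hypotheses holds, the residual zeros in $\Re\,s>\beta$ form a finite set with real parts bounded away from $\beta$, and no accumulation occurs. The main difficulty I anticipate is exactly the (3)-versus-(4) dichotomy: producing the bounded-imaginary-parts statement in (3) and the equidistribution statement in (4) requires a delicate quantitative accounting of how the dominant monomials in $W(p,Y)$ evolve with $p$, combined with a Weyl-type argument on $\{\log p\}$.
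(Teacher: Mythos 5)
Your overall skeleton (convergence for $\Re\,s>\alpha$, continuation to $\Re\,s>\beta$ by stripping cyclotomic pieces, then a case analysis) is reasonable, but the proposal has two genuine gaps. The more serious one is case (3): you attribute the obstruction at $\beta$ to local zeros $s_0(p)$ of $W(p,p^{-s})$ whose real parts tend to $\beta^+$, produced by an implicit-function argument from the gap monomials. Under the hypotheses of case (3) such zeros need not exist at all: polynomials of du Sautoy--Woodward types IV and VI fall under case (3) and have, for all but finitely many $p$, \emph{no} local zeros to the right of $\Re\,s=\beta$, so your mechanism cannot prove the statement. The actual source of the obstruction is the cyclotomic expansion $W=\prod(1-X^nY^m)^{c_{n,m}}$: each factor contributes $\zeta(ms-n)^{c_{n,m}}$, whose pole or zero at the real point $\frac{n+1}{m}$ lies to the right of $\beta$, and these rational points accumulate at $\beta$. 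The real work is then twofold: (i) show that for infinitely many such abscissae $\xi$ the net exponent $\sum_{\frac{n+1}{m}=\xi}c_{n,m}$ does not vanish by cancellation among different cyclotomic factors --- this requires structural information on the expansion (the set $\{(n,m):c_{n,m}\neq0\}$ contains an infinite arithmetic progression of slope $(\beta,1)$, only finitely many points to its right, and lies on a discrete family of parallel rays), plus an argument choosing $\xi=\frac{n+1}{m}$ with a large prime in the denominator (via the prime number theorem in arithmetic progressions) so that only one ray can contribute; and (ii) show that only finitely many of these real poles can be cancelled by zeros of the finitely many exceptional local factors, which follows because $W(p,p^{-(n+1)/m})=0$ forces $\frac{n+1}{m}$ to have bounded denominator. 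None of this cancellation analysis appears in your proposal, and the claim that the imaginary parts of your putative zeros ``stay bounded'' is neither needed for the statement nor justified.

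The second gap concerns precisely the rational-function novelty of the theorem in cases (2) and (4). For a polynomial $W$, accumulation of local zeros of the Euler factors near the line $\Re\,s=\beta$ immediately gives accumulation of zeros of $D$; for $W=P/Q$ a zero of $P(p,p^{-s})$ may be cancelled by a zero of $Q(q,q^{-s})$ for a \emph{different} prime $q$, so the zero or pole may simply be absent from $D$. Your reduction of case (2) to ``the non-cyclotomic horn of Dahlquist'' applied to the main part, and your case (4) argument via periodicity and density of local zeros, both silently assume no such cancellation. Ruling it out is not a triviality: one needs that $P$ and $Q$ have only finitely many common local zeros (an algebraic-geometry argument using coprimality) and, more substantially, a combinatorial bound showing that the total amount of cross-prime cancellation on a short segment $[\sigma+it,\sigma+it+\epsilon]$ is limited --- in the paper this is a graph-theoretic lemma exploiting the $\frac{2\pi i}{\log p}$-periodicity of the local zero patterns and the $\Q$-linear independence of the numbers $\frac{1}{\log p}$, which forces any hypothetical complete cancellation to generate an infinite graph, a contradiction. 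Without an argument of this kind your proofs of cases (2) and (4) only establish the polynomial case already in the literature.
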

We remark that each of these cases actually occurs, that is, there are
Euler-products for which Estermann's approach cannot work.

Notice that while in the third case we need information on the zeros of the
Riemann-zeta function to know about the meromorphic continuation, in
the last case we can say nothing about their continuation.

While the above classification looks pretty technical, these cases
actually behave quite differently. To illustrate this point we consider
 a domain $\Omega\subseteq\C$ with a function
$f:\Omega\rightarrow\C$, let $N_\pm(\Omega)$  the number of zeros
and poles of $f$ in $\Omega$ counted with positive multiplicity, that
is, an $n$-fold zero or a pole of order $n$ is counted $n$
times. Then we have the following.
\begin{coro}
Let $W$ be a rational function, and define $\beta$ as above. Then one of the
following two statements holds true:
\begin{enumerate}
\item For every $\epsilon>0$ we have $N_\pm(\{|z-\beta|<\epsilon, \Re
  z>0\}) = \infty$;
\item We have $N_\pm(\{\Re z>\beta, |\Im z|<T\} =
  \mathcal{O}(T\log T)$. 
\end{enumerate}
If  $\ W$ is a polynomial and  we assume the Riemann
  hypothesis as well as the $\Q$-linear independence of the imaginary parts
  of the non-trivial zeros of $\zeta$, then there exist constants $c_1,
  c_2$, such that $N_\pm(\{\Re z>\beta, |\Im z|<T\}) =
  c_1T\log T + c_2 T + \mathcal{O}(\log T)$.

\end{coro}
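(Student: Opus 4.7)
The plan is to read the corollary directly off the classification in Theorem~\ref{thm:class}. In cases (2), (3) and (4) the point $\beta$ is itself an obstructing point, so by definition there exists a sequence $z_i \to \beta$ with $\Re z_i > \beta$ at which $D$ vanishes or has a pole. This automatically gives the first alternative: for every $\epsilon > 0$ the set $\{|z - \beta| < \epsilon,\, \Re z > 0\}$ contains infinitely many zeros or poles of $D$ counted with multiplicity.

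In cases (1) and (5) I would show that $D$ factors, on the half-plane $\Re s > \beta$, as a finite product $\prod_\nu \zeta(\nu s)^{c_\nu}$ times a factor holomorphic and non-vanishing there. In case (1) this is the content of the theorem. In case (5) one writes $D(s) = \tilde D(s) R(s)$ with $\tilde D(s) = \prod_p \tilde W(p, p^{-s})$ and $R(s) = \prod_p W(p, p^{-s})/\tilde W(p, p^{-s})$; since $\tilde W$ is cyclotomic, $\tilde D$ is already a product of zeta factors by the case~(1) analysis, while the hypotheses defining case~(5)---only finitely many pairs $(n,m)$ with $n/m < \beta < (n+1)/m$ and only finitely many primes $p$ for which $W(p, p^{-s}) = 0$ has a root with $\Re s > \beta$---are exactly what one needs in order to verify that the logarithm of $R$ converges absolutely on $\Re s > \beta$ and that no Euler factor vanishes there. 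Counting zeros and poles of $D$ in $\{\Re z > \beta,\, |\Im z| < T\}$ therefore reduces to counting zeros of $\zeta$ in $|\Im t| < \nu T$ for each $\nu$ with $c_\nu \neq 0$, and the Riemann--von Mangoldt estimate $N_\zeta(T') \ll T' \log T'$ yields the $O(T \log T)$ bound after summing over the finitely many relevant $\nu$.

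For the refined asymptotic in the polynomial case I would invoke RH together with $\Q$-linear independence of the positive imaginary parts of the non-trivial zeros of $\zeta$. Linear independence forces the non-trivial zeros of $\zeta$ to be simple and the zero-sets of $\zeta(\nu s)$ and $\zeta(\nu' s)$ to be disjoint for $\nu \neq \nu'$, so there is no double-counting. Under RH the non-trivial zeros of $\zeta(\nu s)$ lie on the single line $\Re s = 1/(2\nu)$, which meets $\{\Re s > \beta\}$ if and only if $\nu < 1/(2\beta)$, while each pole of $\zeta(\nu s)$ contributes $O(1)$. Substituting the precise Riemann--von Mangoldt formula
\[
N_\zeta(T') = \frac{T'}{2\pi}\log\frac{T'}{2\pi e} + O(\log T')
\]
into the sum of $|c_\nu|$ copies over the finitely many $\nu$ with $c_\nu \neq 0$ and $\nu < 1/(2\beta)$ gives an expression of the form $c_1 T \log T + c_2 T + O(\log T)$, with $c_1, c_2$ explicit linear combinations of the exponents $c_\nu$.

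I expect the main obstacle to be the decomposition in case~(5): verifying that the ``remainder'' $R(s)$ is both holomorphic and non-vanishing on $\Re s > \beta$ is where the two technical hypotheses of case~(5) really earn their keep, and the book-keeping is essentially the content already carried out in the proof of Theorem~\ref{thm:class}. Once this structural statement is in hand, the counting is an application of Riemann--von Mangoldt finitely many times, and the refined form under RH plus linear independence is a direct substitution.
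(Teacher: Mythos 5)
Your handling of the dichotomy follows the paper's route (cases (2)--(4) give alternative (1) via the obstructing point; cases (1) and (5) give the $O(T\log T)$ bound via a factorization into finitely many $\zeta$-factors times a remainder), but your structural claim in case (5) is wrong, and this creates a genuine gap in the refined asymptotic. Case (5) only guarantees that \emph{finitely many} primes $p$ admit zeros of $W(p,p^{-s})$ with $\Re s>\beta$; it does not make the remainder non-vanishing there. Each such local factor, a polynomial in $p^{-s}$ with constant term $1$, produces a full $\frac{2\pi i}{\log p}$-periodic vertical family of zeros, hence $\asymp T$ zeros up to height $T$. The paper keeps exactly this: $D$ is a finite product of (shifted) $\zeta$-factors times a function holomorphic in $\Re s>\beta$ whose zeros are these finitely many periodic patterns, contributing $O(T)$, which is harmless for the $O(T\log T)$ upper bound. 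So the first part of your argument survives once you drop the ``no Euler factor vanishes'' claim, but the refined asymptotic does not.

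For the asymptotic $c_1T\log T+c_2T+O(\log T)$ you only rule out double counting between distinct $\zeta$-factors. The step the paper needs, and you omit, is to show that zeros of the finitely many local factors can coincide with zeros or poles of the $\zeta$-factors at most finitely often: infinitely many coincidences would force, for one local factor and one $\zeta$-factor, zeros of $\zeta$ at $n\bigl(\xi+\frac{2\pi i k}{\log p}\bigr)-m$ for infinitely many $k$; picking four such $k_1,\dots,k_4$ with corresponding zeros $\rho_1,\dots,\rho_4$ gives $(k_3-k_4)(\rho_1-\rho_2)=(k_1-k_2)(\rho_3-\rho_4)$, a $\Q$-linear relation among imaginary parts, contradicting the independence hypothesis. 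Without this, and without counting the local periodic families at all, your $c_2T$ coefficient and the claim that no cancellation occurs are unjustified. Two smaller points: the relevant factors are shifted $\zeta$-functions $\zeta(ms-n)^{c_{n,m}}$, so under RH their zeros lie on $\Re s=\frac{n+1/2}{m}$ (the factors that matter are those with $\frac{n+1/2}{m}>\beta$), not on $\Re s=\frac{1}{2\nu}$; and $\Q$-linear independence of the imaginary parts does not ``force the zeros to be simple'' --- fortunately simplicity is irrelevant since $N_\pm$ counts with multiplicity.
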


Finally we remark that for $\zeta$-functions of nilpotent groups the
generalization to rational functions is irrelevant, since a result of du
Sautoy\cite{Denominator} implies that if $\zeta_G(s)=\prod_p W(p,
p^{-s})$ for a rational function $W(X, Y)=\frac{P(X, Y)}{Q(X, Y)}$,
then $Q$ is a cyclotomic polynomial, that is, $\zeta_G$ can be written
as the product of finitely many Riemann $\zeta$-functions and a
Dirichlet-series of the form $\prod_p W(p, p^{-s})$ with $W$ a
polynomial. However, for other applications it is indeed important to
study rational functions, one such example occurs in the recent work of de
la Bret\`eche and Swynnerton-Dyer\cite{2nob}.

\section{Proof of case 2}

In this section we show that if $\tilde{W}$ is not cyclotomic, then
$\Re\;s=\beta$ is the natural boundary of the meromorphic
continuation of $D$. For $W$ a polynomial this was shown by du Sautoy
and Grunewald\cite{ghost}, our proof closely follows their lines of
reasoning. 

The main difference between the case of a polynomial and a rational function
is that for polynomials the local zeros created by different primes can never
cancel, whereas for a rational function the zeros of the numerator belonging
to some prime number $p$ might coincide with zeros of the denominator
belonging to some other prime $q$, and may therefore not contribute to the
zeros or poles needed to prove that some point on the presumed boundary is a
cluster point. We could exclude the possibility of cancellations by assuming
some unproven hypotheses from transcendence theory, however, here we show that
we can deal with this case unconditionally by proving that the amount of
cancellation remains limited. We first consider the case of cancellations
between the numerator and denominator coming from the same prime number.

\begin{Lem}
\label{Lem:alggeo}
Let $P, Q\in\Z[X, Y]$ be co-prime non-constant polynomials. Then there are only
finitely many primes $p$, such that for some complex number $s$ we have $P(p,
p^{-s})=Q(p, p^{-s})=0$.
\end{Lem}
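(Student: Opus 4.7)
The plan is to reduce the lemma to the classical fact that two coprime polynomials in two variables have only finitely many common zeros in $\C^2$, and then project onto the $X$-coordinate. First I would upgrade the coprimality of $P,Q$ in $\Z[X,Y]$ to coprimality in $\Q(X)[Y]$ via Gauss's lemma: any nontrivial common factor in $\Q(X)[Y]$ of positive degree in $Y$ could be taken primitive as a polynomial in $Y$ over $\Z[X]$, and would then descend to a non-unit common factor of $P$ and $Q$ in $\Z[X,Y]$, contradicting the hypothesis.

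Assuming first that both $P$ and $Q$ have positive degree in $Y$, the resultant $R(X):=\mathrm{Res}_Y(P,Q)\in\Z[X]$ is a nonzero polynomial in $X$ alone, since the $Y$-resultant of two coprime elements of $\Q(X)[Y]$ is a nonzero element of $\Q(X)$ and, being a determinantal polynomial in the coefficients, already lies in $\Z[X]$. The standard specialization property of resultants then says that any common complex zero $(x_0,y_0)$ of $P$ and $Q$ satisfies $R(x_0)=0$. In particular, any prime $p$ admitting an $s\in\C$ with $P(p,p^{-s})=Q(p,p^{-s})=0$ must be a root of $R$; since $R$ has only finitely many roots, only finitely many such primes exist.

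The remaining case, in which one of the two polynomials — say $P$ — has degree $0$ in $Y$, is even more direct: then $P\in\Z[X]$ is non-constant in $X$, so $P(p)=0$ already restricts $p$ to the finite set of roots of $P$ in $\C$, and in particular to finitely many primes.

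The only genuine technical point is the first step, namely making sure that the resultant does not vanish identically; once that is secured, the lemma collapses to the observation that a finite subset of $\C^2$ has only finitely many $X$-coordinates, so in particular only finitely many of them can be prime.
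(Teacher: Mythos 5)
Your proof is correct, but it takes a genuinely different route from the paper. The paper argues geometrically: if infinitely many pairs $(p,p^{-s})$ were common zeros, the variety of $\langle P,Q\rangle$ would have a one-dimensional irreducible component, whose (principal) ideal would be generated by a polynomial dividing both $P$ and $Q$, contradicting coprimality — a soft dimension-theoretic argument showing the common zero set in $\C^2$ is finite. You instead eliminate $Y$: after using Gauss's lemma to pass from coprimality in $\Z[X,Y]$ to coprimality in $\Q(X)[Y]$ (and correctly splitting off the degenerate case of $Y$-degree zero), you observe that $R(X)=\mathrm{Res}_Y(P,Q)$ is a nonzero element of $\Z[X]$ vanishing at the $X$-coordinate of every common zero, so only finitely many primes can occur. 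This is more elementary and in principle effective — the bad primes are located among the roots of an explicit integer polynomial, whereas the paper's argument is purely qualitative (though it yields the slightly stronger fact that there are only finitely many common zeros, not merely finitely many $X$-coordinates). One small precision: rather than appealing to the ``specialization property'' of resultants, which has a caveat when the leading coefficients in $Y$ vanish at $x_0$, it is cleaner to quote the standard identity $R(X)=A(X,Y)P(X,Y)+B(X,Y)Q(X,Y)$ with $A,B\in\Z[X,Y]$, from which $R(x_0)=0$ at any common zero $(x_0,y_0)$ is immediate; with that phrasing your argument is complete.
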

\begin{proof}
Let $V$ be the variety of $\langle P, Q\rangle$ over $\C$. Assume there are
infinitely many pairs $(p, s)$, for which the equation $P(p,
p^{-s})=Q(p, p^{-s})=0$ holds true. Then $V$ is infinite, hence, at least
one-dimensional. Since $P$ and $Q$ are non-constant, we have $V\neq \C^2$,
hence, $V$ is one-dimensional. Let $V'$ be a one-dimensional irreducible
component, and let $R$ be a generator of the ideal corresponding to $V'$. Then
$\langle P, Q\rangle\subseteq\langle R\rangle$, that is, $R$ divides $P$ and
$Q$, which implies that $R$ is constant. But a constant polynomial cannot
define a one-dimensional variety and this contradiction proves our claim.
\end{proof}

Next we use the following graph-theoretic result, describing graphs which are
rather close to trees. We call a cycle in a graph {\it minimal}\/ if it is of length
$\geq 3$, and not the union of two cycles of smaller length.
\begin{Lem}
\label{Lem:Graph}
Let $\mathcal{G}$ be a graph, $k\geq 2$ an integer, such that every vertex has
degree $\geq 3k$, and that there exists a symmetric relation $\sim$ on the
vertices, such that every vertex $v$ is in relation to at most $k$ other
vertices, and every minimal cycle passing through $v$ also passes
through one of the vertices in relation to $v$. Then $\mathcal{G}$ is
infinite.
\end{Lem}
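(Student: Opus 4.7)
The natural approach is proof by contradiction: suppose $\mathcal{G}$ is finite, with $n$ vertices and therefore at least $3kn/2$ edges. The goal is to exhibit a minimal cycle $C$ in $\mathcal{G}$ such that no two vertices of $C$ are in relation $\sim$. Such a cycle directly contradicts the hypothesis, which forces every minimal cycle through any vertex $v$ to meet one of the at most $k$ vertices in relation to $v$.

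The construction would begin by exploiting the sparsity of $\sim$ relative to the edge count: each vertex has at most $k$ relatives but at least $3k$ neighbors. First I would form the subgraph $\mathcal{G}^{\circ}$ obtained by deleting those edges $\{u,v\}$ of $\mathcal{G}$ with $u \sim v$. Each vertex loses at most $k$ such edges, so $\delta(\mathcal{G}^{\circ}) \geq 3k - k = 2k \geq 4$, and in particular $\mathcal{G}^{\circ}$ contains cycles. Starting from a shortest cycle $C_0$ in $\mathcal{G}^{\circ}$ (which is chord-free in $\mathcal{G}^{\circ}$), I would iteratively eliminate any $\sim$-related pair of vertices lying on the current cycle: whenever $u \sim v$ lie on it (necessarily non-adjacent along the cycle, since the edge $uv$ would have been deleted from $\mathcal{G}^{\circ}$), the extra slack between $3k$ and $2k$ provides enough unrelated neighbors to construct a detour that replaces the arc between $u$ and $v$ without creating a new $\sim$-relation. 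Pursuing this procedure to termination should yield a cycle $C$ that is minimal in $\mathcal{G}$ and carries no $\sim$-related pair on it, producing the desired contradiction.

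The main obstacle will be the iterative modification step. A naive detour can introduce new relations through its interior vertices, or produce a cycle which, while chord-free, is not minimal in $\mathcal{G}$ because of a shortcut path through vertices outside the construction. The precise threshold $3k$ is presumably chosen so that at each stage the available neighbors ($\geq 3k$) outnumber the vertices one must avoid (the accumulated $\sim$-neighborhoods of the vertices already on the cycle, together with the cycle itself), but carrying out this bookkeeping rigorously, without losing control of the $\sim$-neighborhoods picked up along the detour and while maintaining genuine minimality of the final cycle in $\mathcal{G}$, is the delicate part.
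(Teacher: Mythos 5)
Your plan is not the paper's argument, and as it stands it has a genuine gap: the entire content of the lemma is concentrated in the ``iterative modification step'' that you describe only as a hope and explicitly defer. Worse, the numerical heuristic you offer for why the threshold $3k$ should make the surgery work does not hold up for this strategy. When you reroute the arc between a related pair $u\sim v$, the detour vertices must avoid being related to (and must not create chords with) \emph{every} vertex already on the current cycle; that forbidden set has size up to $(k+1)\,|C|$, which exceeds the degree bound $3k$ as soon as the cycle is longer than a bounded length, so there is no per-step surplus of ``unrelated neighbours'' of the kind you invoke. In addition, nothing in the scheme decreases monotonically: each detour can import new $\sim$-related pairs through its interior vertices, so termination is unclear; and minimality of the final cycle in $\mathcal{G}$ is a global condition which the surgery does not control --- a shortest cycle in the subgraph obtained by deleting edges between related vertices is chordless only in that subgraph, and after the first detour even that property is lost, while a single chord already writes the cycle as a union of two shorter cycles. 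So the object you need (a cycle that is simultaneously relation-free \emph{and} minimal in $\mathcal{G}$) is never actually produced.

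The paper avoids these difficulties by never trying to build such a cycle. It argues by contradiction with a tree, not a cycle: fix $v_0$, call a geodesic path \emph{good} if no two of its vertices are $\sim$-related, and observe (using the minimal-cycle hypothesis) that the good paths starting at $v_0$ form a tree. One then counts the layers of this tree in aggregate: each vertex of layer $i$ has at least $3k$ neighbours of which at most $k$ are its own relatives, and relations to earlier vertices are discounted globally, giving
\[
n_{i+1}\;\geq\; 2k\,n_i - k\,(n_{i-1}+\cdots+n_1+1),
\]
whence $n_{i+1}\geq k\,n_i$ by induction for $k\geq 2$. The layers grow geometrically, so the tree, hence $\mathcal{G}$, is infinite. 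The crucial difference is that this aggregate layer count sidesteps exactly the bookkeeping problem your detour construction runs into: one never has to shield a single path or cycle from the union of all accumulated $\sim$-neighbourhoods at once. If you want to salvage your approach you would need a new idea to control both the growth of the forbidden set and minimality in $\mathcal{G}$; as written, the proposal does not prove the lemma.
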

\begin{proof}
Suppose that $\mathcal{G}$ were finite, and fix some vertex $v_0$. We call a
geodesic path {\it good}\/ if no two vertices of the path 
stand in relation to each other. We want to construct an infinite good
path. Note that 
$p_1$ and $p_2$ are good paths of finite length, they cannot intersect in but
one point, for otherwise their union would contain a cycle, and choosing one
of the intersection points we would obtain a contradiction with the
definition of a
good path. Hence, the union of the good paths starting in $v_0$ forms a tree. There
are $\geq 3k$ vertices connected to $v_0$, at most $k$ of which are forbidden.
Hence, the first layer of the tree contains at least $2k$ points. Each of
these points is connected to at least $3k$ other points. It stands in relation
to at most $k$ of them and hence we can extend every path in at least $2k$ ways,
and of all these paths at most $k$ stand in relation with $v_0$. Hence, the
second layer contains at least $4k²-k$ points. Denote by $n_i$ the number of
points in the $i$-th layer of the tree. Then, continuing in this way, we
obtain
\[
n_{i+1}\geq 2kn_i - k(n_{i-1} + \dots + n_1 + 1).
\]
From this and the assumption that $k\geq 2$ it follows by induction that
$n_{i+1}\geq kn_i$, hence, the tree and therefore the graph $\mathcal{G}$,
which contains the tree, is infinite.
\end{proof}

Note the importance of symmetry: if the relation is allowed to be
non-symmetric, we can get two regular trees, and identify their leaves. Then every
minimal cycle passing through one point either passes through its parent node
or the mirror image of the point. Thus in the absence of symmetry the result becomes
wrong for arbitrarily large valency even for $k=2$.

We can now prove our result on non-cancellation.

\begin{Lem}
\label{Lem:nocancel}
Let $P, Q\in\Z[X, Y]$ be co-prime polynomials with $\beta$ defined as in the
introduction. Let $\epsilon>0$ be given, and suppose that for a prime $p_0$
sufficiently large $P(p_0, p_0^{-s})$ has a zero on the segment $[\sigma+it,
\sigma+it+\epsilon]$, where $\sigma>\beta$. Then $\prod_p\frac{P(p,
  p^{-s})}{Q(p, p^{-s})}$ has a zero or a pole on this segment.
\end{Lem}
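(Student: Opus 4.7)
The plan is to argue by contradiction, assuming that $D(s) = \prod_p P(p, p^{-s})/Q(p, p^{-s})$ has neither a zero nor a pole on $I = [\sigma+it, \sigma+it+\epsilon]$. Then the hypothesized zero of $P(p_0, p_0^{-s})$ at some $s_0\in I$ must be cancelled exactly by a matching zero of $Q(q, q^{-s})$ at the same point $s_0$. By Lemma \ref{Lem:alggeo}, for $p_0$ sufficiently large the matching prime $q$ cannot equal $p_0$, so the cancellation genuinely crosses primes.

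The next observation is that this zero of $Q(q, q^{-s})$ is usually one of several zeros of that factor on $I$ (or in a small neighbourhood obtained via the period $2\pi i/\log q$), and each such zero must itself be cancelled by a zero of some $P(p', p'^{-s})$ with $p'\neq q$, on pain of creating a pole of $D$. Iterating produces a system of forced pairings among primes. Crucially, the set of primes participating in these pairings is \emph{finite}: for $p$ large and $\sigma>\beta$, both $P(p, p^{-s})$ and $Q(p, p^{-s})$ are uniformly close to $1$ on $I$ (by the power-series expansion defining $\beta$), hence have no zeros there.

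I would encode these pairings as a graph $\mathcal{G}$ on this finite set of primes and invoke Lemma \ref{Lem:Graph} to obtain a contradiction. A natural candidate for the symmetric relation is $p\sim q$ iff the matched zeros at $p$ and $q$ lie on a common irreducible component of the curve $\{P=0\}$ or $\{Q=0\}$ in $\C^2$. The coprimality of $P, Q$ together with Lemma \ref{Lem:alggeo} then limits each vertex to at most $k$ related neighbours, with $k$ depending only on $\deg P$ and $\deg Q$, and forces every minimal cycle of cancellations to pass through such an algebraically related pair; the Graph Lemma's conclusion that $\mathcal{G}$ is infinite then contradicts its finiteness.

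The main obstacle I expect is verifying the degree hypothesis $\geq 3k$ of the Graph Lemma, since a single short segment of length $\epsilon$ naively carries only $O(\deg_Y P+\deg_Y Q)$ zeros per prime, which may be too few to beat the threshold once $k$ is chosen. I would address this by thickening $I$ to include enough vertical translates of $s_0$ by the period $2\pi i/\log p$ so that each participating factor $P(p, p^{-s})$ or $Q(p, p^{-s})$ contributes many zeros to the enlarged region, while the bound $\sigma>\beta$ keeps the set of participating primes finite. Balancing this thickening against the finiteness argument, and checking that the induced symmetric relation genuinely captures every minimal cancellation cycle, will be the most delicate point.
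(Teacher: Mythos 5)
You have the right skeleton---argue by contradiction, observe that every local zero must be cancelled by a zero coming from a different prime (via Lemma~\ref{Lem:alggeo}), note that only finitely many primes can participate because $\sigma>\beta$, and feed the cancellation structure into Lemma~\ref{Lem:Graph}---but the two steps that actually make Lemma~\ref{Lem:Graph} applicable are missing, and the substitutes you propose do not supply them. In the paper the vertices are not the primes but the residue classes of zeros: for each participating prime one takes the roots $z_i^p$ of $P(p,p^{-s})=0$ and $w_j^q$ of $Q(q,q^{-s})=0$ modulo the periods $\frac{2\pi i}{\log p}$, $\frac{2\pi i}{\log q}$, and joins $z_i^p$ to $w_j^q$ when some translate of the one cancels a translate of the other on the segment. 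The large valency then comes from periodicity inside the original segment: a class that meets the segment at all meets it in at least $\big[\frac{\epsilon\log p_1}{2\pi}\big]$ points, where $p_1$ is the smallest participating prime, and $p_1$ can be forced to be large because taking $p_0$ large makes $\sigma-\beta$ small, so that small primes have no zeros that close to $\Re\,s=\beta$. Your worry about too few zeros on a segment of length $\epsilon$ is resolved this way, not by thickening the segment: enlarging the region would only produce a zero or pole somewhere in the enlarged region and hence prove a weaker statement than the lemma asserts.

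The more serious gap concerns the symmetric relation and the minimal-cycle hypothesis. The paper's relation is simply ``same prime'', $x_i^p\sim x_j^q\Leftrightarrow p=q$ with $x\in\{z,w\}$, whose classes are bounded in terms of the degrees of $P$ and $Q$; and the fact that every minimal cycle passes through two vertices with the same prime is exactly the arithmetic heart of the proof: traversing a cycle of cancellations and adding up the congruences yields a relation $2\pi i\sum_i \lambda_i/\log p_i=0$ with $\lambda_i\in\Z$, which cannot hold if some prime occurs only once in the cycle. Your proposed relation---matched zeros lying on a common irreducible component of $\{P=0\}$ or $\{Q=0\}$---comes with no argument that minimal cancellation cycles must pass through related vertices, and there is no reason they should: an edge of the cancellation graph is a congruence condition modulo the periods $2\pi i/\log p$, a Diophantine constraint, not an algebraic-geometric one, and coprimality together with Lemma~\ref{Lem:alggeo} only rules out same-prime cancellations. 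Without the $\sum_i\lambda_i/\log p_i=0$ step (linear independence of the periods) you cannot verify the cycle hypothesis of Lemma~\ref{Lem:Graph}, so the intended contradiction with the finiteness of the graph never materializes.
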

\begin{proof}
Since the local zeros converge to the line $\Re\;s=\beta$, there are only
finitely many primes $p$ for which the numerator or denominator has a zero,
hence, we may assume that $P(p, p^{-s}), Q(p, p^{-s})\neq 0$ for $p>p_0$. For
each prime $p$ let $z_1^p, \ldots, z_k^p$ be the roots of the equation
$P(p,p^{-s})=0$ in the segment $\Re\;s=\beta$, $0\leq\Im\;s\leq
\frac{2\pi}{\log p}$, and let $w_1^p, \ldots, w_\ell^p$ be the roots of the
equation $Q(p,p^{-s})=0$ on this segment. Such roots need not exist  but
if they do then their number is bounded independently of
$p$. The roots of the equations $P(p, p^{-s})=0$ and $Q(p, p^{-s})=0$ form a
pattern with period $\frac{2\pi i}{\log p}$. If $p_0$ is sufficiently large,
then $\delta$ becomes arbitrary small, hence, if $p$ is not large then
the equations $P(p,p^{-s})=0$ and $Q(p, p^{-s})=0$ do not have
solutions on the 
line $\Re\;s=\beta+\delta$. Let $p_1$ be the least prime for which such solutions
exist. For $p_1$ sufficiently large and $p>p_1$, either $P(p, p^{-s})=0$  has
no solution on the segment under consideration or it has at least
$\big[\frac{\epsilon\log p}{2\pi}\big]$ such solutions. Note that by fixing
$\epsilon$ and choosing $p_0$ sufficiently large we can make this expression
as large as we need. Further note that by choosing $p_0$ large we can ensure,
in view of Lemma~\ref{Lem:alggeo}, that $P(p, p^{-s})=Q(p, p^{-s})=0$ has no
solution on the line $\Re\;s=\beta+\delta$.

We now define a bipartite graph $\mathcal{G}$ as follows: The vertices of the
graph are all complex numbers $z_i^p$ in one set and all complex numbers
$w_i^p$ in the other set, where $p\leq p_0$. Two vertices $z_i^p$, $w_j^q$
are joined by an edge if there exists a complex number $s$ with
$\Re\;s=\beta+\delta$, $t\leq\Im\; s\leq t+\epsilon$, such that $s$ is
congruent to $z_i^p$ modulo $\frac{2\pi i}{\log p}$, and congruent to $w_j^q$
modulo $\frac{2\pi i}{\log q}$. In other words, the existence of an edge
indicates that one of the zeros of $P(p, p^{-s})$ obtained from $z_i^p$ by
periodicity cancels with one zero of $Q(q, q^{-s})$ obtained from $w_j^q$.
If $\prod_p\frac{P(p,p^{-s})}{Q(p, p^{-s})}$ has neither a zero nor a pole on
the segment, then every zero of one of the polynomials cancels with a zero of
the other polynomial, that is, every vertex has valency at least
$\big[\frac{\epsilon\log p_1}{2\pi}\big]$. 

We next bound the number of cycles. Suppose that $z_{i_1}^{p_1}\sim
w_{i_2}^{p_2}\sim\dots\sim w_{i_\ell}^{p_\ell}\sim z_{i_1}^{p_1}$. Then there
is a complex number $s$ in the segment which is congruent to
$z_{i_1}^{p_1}$ modulo $\frac{2\pi i}{\log p_1}$ and congruent to
$w_{i_2}^{p_2}$ modulo $\frac{2\pi i}{\log p_2}$. Going around  the
cycle and collecting the differences we obtain an equation of the form $2\pi
i\sum_i \frac{\lambda_i}{\log p_i} = 0$, $\lambda_i\in\Z$, which can only hold
if the combined coefficients vanish for each occurring prime. However the
coefficients cannot vanish if some prime occurs only once. If the cycle is
minimal the same vertex cannot occur twice, hence, there is some $j$ such
that $p_1=p_j$, but $i_1\neq i_j$. Hence, every minimal cycle containing
$z_{i_1}^{p_1}$ must contain $z_{i_j}^{p_j}$ or $w_{i_j}^{p_j}$ for some $i\neq
j$. The relation defined by $x_i^p\sim x_j^q\Leftrightarrow p=q$, $x\in\{z,
w\}$ is an equivalence relation with equivalence classes bounded by some
constant $K$. If we choose $p_1>\exp(6\pi K\epsilon^{-1})$, the assumptions of
Lemma~\ref{Lem:Graph} are satisfied, and we conclude that $\mathcal{G}$ is
finite.

But we already know that there is no $p>p_0$ for which $P(p,p^{-s}) = 0$ or
$Q(p, p^{-s}) = 0$ has a solution, that is, $\mathcal{G}$ is finite. This
contradiction completes the proof.
\end{proof}

Using Lemma~\ref{Lem:nocancel} the proof now proceeds in the same fashion as
in the polynomial case; for the details we refer the reader to the proof given
by du Sautoy and Grunewald\cite{ghost}. 
\section{Development in cyclotomic factors}
A rational function $W(X, Y)$ with $W(X, 0)=1$ can be written as an
infinite product of polynomials of the form $(1-X^aY^b)$. Here
convergence is meant with 
respect to the topology of formal power series, that is, a product
$\prod_{i=1}^\infty(1-X^{a_i}Y^{b_i})$ converges to a power series $f$
if for each $N$ there exists an $i_0$, such that for $i_1>i_0$ the
partial product $\prod_{i=1}^{i_1}(1-X^{a_i}Y^{b_i})$ coincides with
$f$ for all coefficients of monomials $X^aY^b$ with $a, b<N$. The
existence of such an extension is quite obvious, however, we need some
explicit information on the  factors that occur and we shall develop the
necessary information here.

For a set $A\subseteq\R^2$ define the convex cone $\overline{A}$
generated by $A$ to be the smallest convex subset containing $\lambda
a$ for all $a\in A$ and $\lambda>1$. A point $a$ of $A$ is extremal, if it
is contained in the boundary of $\overline{A}$ and there exists a
tangent to $A$ intersecting $A$ precisely in $a$, or set theoretically
speaking, if $\overline{A\setminus\{a\}}\neq\overline{A}$. Note that a
convex cone forms an additive semi-group as a subsemigroup of $\R^2$.

To a formal power series $W=\sum_{n, m} a_{n, m} X^n Y^m\in\Z[[X, Y]]$ we
associate the set $A_W = \{(n, m): a_{n, m}\neq 0\}$. Suppose we start
with a rational function $W\in\Z[X, Y]$, which is of the form
$\frac{P(X, Y)}{Q(X, Y)}$ with $P(X, 0)=Q(X, 0)=1$. Then
\[
\frac{1}{Q(X, Y)} = \sum_{\nu=0}^\infty \big(Q(X, Y)-1\big)^\nu =
\sum_{n, m} b_{n, m} X^n Y^m,
\]
say, where the convergence of the geometric series as a formal power series
follows from the fact that every monomial in $Q$ is divisible by
$Y$. The set $\{(n, m):b_{n, m}\neq 0\}\subseteq\R^2$ is contained in
the semigroup generated by the points corresponding to monomials in
$Q$, but may be strictly smaller, as there could be unforeseen
cancellations. Multiplying the power series by $P(X, Y)$, we obtain
that $A_W$ is contained within finitely many shifted copies of
$A_{Q^{-1}}$.

Let $(n, m)$ be an extremal point of
$A_w$. Then we have $W=(1-X^n Y^m)^{-a_{n, m}} W_1(X, Y)$, where
$W_1(X, Y) = (1-X^n Y^m)^{a_{n, m}} W(X, Y)$. Obviously, $W_1(X, Y)$
is a formal power series with integer coefficients, we claim that
$\overline{A_{W_1}}$ is a proper subset of $\overline{A_W}$. In fact,
the monomials of $W_1$ are obtained by taking the monomials of $A_W$,
multiplying them by some power of $X^nY^m$, and possibly adding up the
contribution of different monomials. Hence, $A_{W_1}$ is contained
in the semigroup generated by $A_W$. But $(n, m)$ is not in $A_{W_1}$,
and since $(n, m)$ was assumed to be extremal, we obtain
\[
A_{W_1}\subseteq\langle A_W\rangle\setminus\{(n, m)\} \subseteq
\overline{A_W}\setminus\{(n, m)\} \subset\overline{A_W}.
\]
Taking the convex cone is a hull operator, thus $\overline{A_{W_1}}$
is a proper subset of $\overline{A_{W}}$. Since we begin and end with a subset
of $\N^2$, we can repeat this procedure so that after finitely many
steps the resulting power series $W_k$ contains no non-vanishing
coefficients $a_{n, m}$ with $n<N, m<M$. This suffices to prove the
existence of a product decomposition, in fact, if one is not
interested in the occurring cyclotomic factors one could avoid power
series and stay within the realm of polynomials by setting $W_1(X, Y)
= (1+X^n Y^m)^{-a_{n, m}} W(X, Y)$ whenever $a_{n, m}$ is negative.
However, in this way we trade one operation involving power series for
infinitely many involving polynomials, which is better avoided for
actual calculations.

While we can easily determine a super-set of $A_{W_1}$, in general we
cannot prove that some coefficient of $W_1$ does not vanish, that is,
knowing only $A_W$ and not the coefficients
we cannot show that $A_{W_1}$ is as large as we suspect it to
be. However, it is easy to see that when eliminating one extremal
point all other extremal points remain untouched. In particular, if we
want to expand a polynomial $W$ into a product of cyclotomic polynomials,
at some stage we have to use every extremal point of $A_W$, and the
coefficient attached to this point has not changed before this step,
by induction it follows that the expansion as a cyclotomic product is
unique. 

We now assume that $\tilde{W}$ is cyclotomic, while $W$ is not. We
further assume that $\tilde{W}$ is a polynomial, and that the
numerator of $W$ is not divisible by a cyclotomic polynomial. We can
always satisfy these assumptions by multiplying or dividing $W$ with
cyclotomic polynomials, which corresponds to the multiplying or
dividing $D$ with certain shifted $\zeta$-functions, and does not
change our problem. Our aim is to find some 
information on the set $\{(n,m):c_{n, m}\neq 0\}$, where the
coefficients $c_{n, m}$ are defined via the expansion $W(X, Y)=\prod
(1-X^n Y^m)^{c_{n, m}}$. 

In the first step we remove all points on the line
$\frac{n}{m}=\beta$. By assumption we can do so by using finitely
many cyclotomic polynomials. The resulting power series be
$W_1$. The inverse of the product of finitely many cyclotomic
polynomials is a power series with poles at certain roots of unity,
hence, we can express the sequence of coefficients as a polynomial in
$n$ and Ramanujan-sums $c_d(n)$ for $d$ dividing some integer
$q$. Consider some point $(n, m)\in\overline{A_W}$, and compute the
coefficient attached to this point in $W_1$. If $A_W$ does not contain
a point $(n', m')$, such that $(n-n', m-m')$ is collinear to $(\beta,
1)$, then this coefficient is clearly 0. Otherwise we consider all
points $(n_1, m_1), \ldots, (n_k, m_k)$ in $A_W$, which are on the
parallel to $(\beta t, t)$ through $(n, m)$. The coefficients of
$W_1$ attached to points on $\ell$ are linear combinations of shifted
coefficients of inverse cyclotomic polynomials, hence, they can be
written as some polynomial with periodic coefficients. In particular,
either there are only finitely many non-vanishing coefficients, or
there exists a complete arithmetic progression of non-vanishing
coefficients. Hence, we find that $A_{W_1}$ is contained within a
locally finite set of lines parallel to $(\beta t, t)$, and every line 
either contains only finitely many points, or a complete arithmetic
progression. Suppose that every line contains only finitely many
points. Then there exists some $\beta'>\beta$, such that $A_{W_1}$
is contained in $\{(s, t):s\geq\beta_1 t\}$, in particular, $W_1$ is
regular in $\{(z_1, z_2):|z_1|^\beta\leq|z_2|\}$. But
$W_1=\frac{P}{Q\tilde{W}}$, and by assumption $P$ is not divisible by
$\tilde{W}$, therefore, there exist points $(z_1, z_2)$, where
$\tilde{W}$ vanishes, but $P$ does not, and these points are
singularities of $W_1$ satisfying $|z_1|^\beta\leq|z_2|$. Hence,
there exists some line containing a complete arithmetic progression.

Let $(x, 0)+t(\beta, 1)$ be the unique line containing infinitely many
elements of $A_{W_1}$, such that for each $y>0$ the line $(y, 0)+t(\beta,
1)$ contains only finitely many elements $(n_1, m_1), \ldots, (n_k, m_k)$ of
$A_{W_1}$. Set $\delta=-x$, that is, the distance of the right
boundary of $A_W$ from the line $(x, 0)+t(\beta, 1)$ measured
horizontaly, and set $\delta_i=-m_i\beta/n_i$, that is, $\delta_i$ is
the distance of $(n_i, m_i)$ from the right boundary, also measured
horizontally, and $\delta_-=\min\delta_i>0$.

We now eliminate the points $a_i$ to obtain the power series
$W_2$. When doing so we introduce lots of 
new elements to the left of the line $(x, 0)+t(\beta, 1)$, which are
of no interest to us, and finitely many points on this line or to the
right of this line, in fact, we can get points at most at the points of the
form $\lambda(n_i, m_i)+\mu(n_j, m_j)$, $\lambda, \mu\in\N$, $\lambda,
\mu>0$. Note that the horizontal distance from the line $t(\beta, 1)$
is additive, that is, $A_{W_2}$ is contained in the intersection of
$\overline{A_W}$ and the half-plane to the left of the line $(x,
0)+t(\beta, 1)$, together with finitely many points between the lines $(x,
0)+t(\beta, 1)$ and $t(\beta, 1)$, each of which has distance at
least $2\delta_-$ from the latter line. Repeating this procedure, we
can again double this distance, and after finitely many steps this minimal
distance is larger than the width of the strip, which means that we
have arrived at a power series $W_3$ such that $A_{W_3}$ is contained
in the intersection of $\overline{A_W}$ and the half-plane to the left
of the line $(x, 0)+t(\beta, 1)$. Moreover, since at each step there
are only finitely many points changed on the line $(x, 0)+t(\beta, 1)$
, we see that the intersection of $A_{W_3}$ with this line
equals the intersection of $A_{W_1}$ with this line up to finitely
many inclusions or omissions. Since an infinite arithmetic progression, from
which finitely many points are deleted still contains an infinite
arithmetic progression, we see that $A_{W_3}$ contains an infinite
arithmetic progression. 

Next we eliminate the points on $A_W$ starting at the bottom and
working upwards. When eliminating a point, we introduce (possibly
infinitely many) new points, but all of them are on the left of the
line $(x, 0)+t(\beta, 1)$. Hence, after infinitely many steps we
arrive at a power series $W_4$, for which $A_{W_4}$ is contained in
the intersection of $\overline{A_W}$ and the open half-plane to the
left of $(x, 0)+t(\beta, 1)$. 

Fortunately, from this point on we can be less explicit. Consider
the set of differences of the sets $A_{W_i}$ from the line $t(\beta,
1)$. Taking the differences is a semi-group homomorphism, hence, at
each stage the set of differences is contained in the semi-group
generated by the differences we started with. But since $W$ is a
polynomial, this semi-group is finitely generated, and therefore
discrete. Hence, no matter how we eliminate terms, at each stage the
set $A_{W_i}$ is contained in a set of parallels to $t(\beta, 1)$
intersecting the real axis in a discrete set of non-positive numbers. 

Collecting the cyclotomic factors used during this procedure, we have
proven the following.

\begin{Lem}
\label{Lem:elim}
Let $W(X, Y)$ be a rational function such that $\tilde{W(X, Y)}$
is a cyclotomic polynomial, but $W$ itself is not cyclotomic. Define
$\beta$ as above. Then there is a unique
expansion $W(X, Y) = \prod_{n, m} (1-X^n Y^m)^{c_{n, m}}$. The set
$C=\{(n, m): c_{n, m}\neq 0\}$ contains an infinite arithmetic
progression with difference a multiple of $(\beta, 1)$, only finitely
many elements to the right of this line, and all entries are on lines
parallel to $t(\beta, 1)$, such that the lines intersect the real
axis in a discrete set of points.
\end{Lem}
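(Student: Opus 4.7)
The plan is to organize the iterative elimination argument sketched in the preceding discussion into a clean proof. First I would establish uniqueness of the cyclotomic expansion: the observation made earlier — that eliminating one extremal point of $A_W$ leaves all other extremal points untouched — implies by induction that the exponent $c_{n,m}$ at any extremal point is forced, so the entire expansion is forced once an elimination order is fixed, and independence from the order follows because any two orderings must agree on each extremal point at the step when it is first forced to be eliminated.

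For existence together with the structural claim, I would proceed in stages. Stage one: use the hypothesis that $\tilde{W}$ is cyclotomic to eliminate, with finitely many cyclotomic factors, every support point on the line $n/m=\beta$; call the result $W_1$. The coefficients of the inverse of a finite product of cyclotomic polynomials can be expressed as polynomials in $n$ whose arguments are Ramanujan sums $c_d(n)$, so along each line parallel to $(\beta,1)$ the coefficients of $W_1$ form a quasi-polynomial sequence and hence are either eventually zero or contain a full arithmetic progression of nonzero values. Stage two: show that at least one such line supports an infinite arithmetic progression. Otherwise $A_{W_1}$ would lie in a half-plane $\{s\ge\beta' t\}$ with $\beta'>\beta$, so $W_1$ would be holomorphic on $\{|z_1|^\beta\le|z_2|\}$; but $W_1=P/(Q\tilde{W})$ and the hypothesis that the numerator is not divisible by $\tilde{W}$ forces zeros of $\tilde{W}$ to be genuine singularities of $W_1$ inside that region, a contradiction. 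Let $(x,0)+t(\beta,1)$ be the rightmost such line.

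Stage three: eliminate the finitely many support points of $W_1$ lying strictly to the right of $(x,0)+t(\beta,1)$. The horizontal distance from the line $t(\beta,1)$ is additive under the semigroup operation induced by extremal-point elimination, so each round at least doubles the minimum positive such distance. After finitely many rounds we obtain $W_3$ whose support contains no points strictly to the right of $(x,0)+t(\beta,1)$, and whose restriction to that line differs from that of $W_1$ only by finitely many modifications — so an infinite arithmetic progression survives. Stage four: eliminate the points on $(x,0)+t(\beta,1)$ itself from the bottom upward; each such elimination only introduces new support points strictly to the left of that line. The residual $W_4$ is thus supported in the open left half-plane bounded by $(x,0)+t(\beta,1)$, and the remaining claim — that the support lies on parallels to $t(\beta,1)$ meeting the real axis in a discrete set — follows from the semigroup homomorphism argument: horizontal distances from $t(\beta,1)$ of any points ever produced lie in the semigroup generated by those of the original $A_W$, which is finitely generated because $W$ is rational, hence discrete.

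The main obstacle is stage two, where the non-cyclotomicity of $W$ must be converted into the existence of an infinite arithmetic progression on some line. Everything else is bookkeeping on the semigroup of horizontal offsets; the genuine analytic input — that the numerator of $W$ failing to be divisible by $\tilde{W}$ forces actual singularities of $W_1$ in the exterior region $|z_1|^\beta\le|z_2|$ — is the only place where the hypotheses really bite, and the rest of the argument is a careful geometric accounting of which lattice points can and cannot appear at each elimination step.
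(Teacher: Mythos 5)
Your proposal is correct and follows essentially the same route as the paper, whose proof of this lemma is exactly the elimination procedure you describe: uniqueness via extremal points, removal of the points on the line $n/m=\beta$ by finitely many cyclotomic factors, the Ramanujan-sum/quasi-polynomial description forcing either finiteness or a full arithmetic progression on each parallel line, the singularity argument using that the numerator is not divisible by $\tilde{W}$, the distance-doubling elimination of the finitely many points to the right, and the discreteness of horizontal offsets from the finitely generated semigroup. No substantive differences to report.
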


\section{Proof of case 3}
We prove that $\beta$ is an obstructing point. For integers $n, m$ with
$c_{n, m}\neq 0$ the factor $\zeta(-n+ms)^{c_{n, m}}$ creates a pole
or a zero
at $\frac{n+1}{m}$, which for $\frac{n+1}{m}>\beta$ is to the right of
the supposed boundary. Hence, if $\beta$ is not an obstructing point,
 for some $\epsilon>0$ and all rational numbers $\xi\in(\beta, 
 \beta+\epsilon)$ we would have  $\sum_{\frac{n+1}{m}=\xi} c_{n, m} = 0$. We now
show that this is impossible by proving that there are pairs $(n, m)$
with $\frac{n+1}{m}$ arbitrarily close to $\beta$, $c_{n, m}\neq 0$,
such that the sum consists of a single term, and is therefore non-zero
as well. 

Let $\frac{k}{\ell}$ be the slope of the rays. 
Let $\{(n_i, m_i)\}$ be a list of the starting points of the rays
described in Lemma~\ref{Lem:elim}, where $(n_0, m_0)$ defines the
right-most ray.
Take an integer
$q$, such that $c_{km\nu+n_0, \ell q\nu+m_0}\neq 0$ for all but
finitely many natural numbers $\nu$. Let $d$ be the greatest common divisor of 
$m_0$ and $q$. The prime number theorem for arithmetic progressions guarantees
infinitely many $\nu$, such that $\frac{\ell\nu+m_0}{d}=p$ is
prime. Suppose there is a pair $n', m'$ belonging to another ray, such
that $c_{n', m'}\neq 0$ and $\frac{n'+1}{m'}=
\frac{k\nu+n_0}{\ell\nu+m_0}$. The point $(n', m')$ must lie on one
of the finitely many rays, hence, we can write $n'=k\nu'+n_1$,
$m'=\ell\nu'+m_1$. Since $p$ is a divisor of the denominator of the
right hand side, it also has to divide the denominator of the left
hand side. We obtain that $p$ divides both $\ell\nu+m_0$ and
$\ell\nu'+m_1$. Restricting, if necessary, to an arithmetic
progression, we obtain an infinitude of indices such that
$\ell\nu'+m_1=t(\ell\nu +m_0)$, where $t\in[0, 1]$ is a rational number with
denominator dividing $d$. Hence, we obtain that the equations
\[
\ell\nu'+m_1=t(\ell\nu +m_0),\quad t(k\nu'+n_1) = k\nu+n_0
\]
have infinitely many  solutions $\nu, \nu' \in \N $. Two linear
equations in two variables, none of which is trivial, can only have
infinitely many solutions, if these equations are equivalent, that is, $t^2=1$,
which implies $t=1$ since $t$ is positive by definition. Hence, writing the
equations as vectors, we have
\[
(\nu-\nu')\binom{k}{\ell} = \binom{n_1}{m_1} - \binom{n_0}{m_0},
\]
that is, the vector linking $\binom{n_0}{m_0}$ with
$\binom{n_1}{m_1}$ is collinear with $\binom{k}{\ell}$, contrary
to the assumption that $n', m'$ was on a ray other than that of $n, m$. Hence,
poles of $\zeta$-factors accumulate at $\beta$. It remains to check
that these poles are not cancelled by zeros of other factors. Since
zeros of $\zeta$-factors are never positive reals, these factors do not
cause problems. Suppose that a pole of $\zeta(ns-m)$ cancels with a
zero of the local factor $W(p, p^{-s})$, that is, $W(p,
p^{-(m+1)/n})=0$. Since $W$ has coefficients in $\Z$, this implies
that $p^{-(m+1)/n}$ is algebraic of degree at most equal to the degree
of $W$, hence, $\frac{m+1}{n}$ can be reduced to a fraction with
denominator at most equal to the degree of $W$. There are only
finitely many rational numbers in the interval $[\beta, \beta+1]$
with bounded denominator, hence, only finitely many of the poles
can be cancelled, that is, $\beta$ is in fact an obstructing point.

For the corollary note that in cases (2)--(4) $\beta$ is an
obstructing point, that is, in these cases the first condition of the
corollary holds true. In case (1) and (5), we can represent $D$ as the
product of finitely many Riemann $\zeta$-functions multiplied by some
function which is holomorphic in the half-plane
$\Re\;s>\beta$, and has zeros only where the finitely many local
factors vanish. A local factor belonging to the prime $p$ creates a
$\frac{2\pi i}{\log p}$-periodic pattern of zeros, hence, the number
of zeros and poles is bounded above by the number of zeros of the
finitely many $\zeta$-functions, which is $\mathcal{O}(T\log T)$, and
the finitely many sets of periodic patterns, which create
$\mathcal{O}(T)$ zeros. Hence, $N_\pm(\{\Re z>\beta, |\Im z|<T\})$ is
$\mathcal{O}(T\log T)$. It may happen that there are significantly
less poles or zeros, if poles of one factor coincide with poles of
another factor, however, we claim that under RH and the assumption of
linear independence of zeros the amount of cancellation is
negligible. First, if the imaginary part of zeros of $\zeta$ are
$\Q$-linearly independent, then we cannot have
$\zeta(n_1s-m_1)=\zeta(n_2s-m_2)=0$ for integers $n_1, n_2, m_1, m_2$
with $(n_1, m_1)\neq (n_2, m_2)$, that is, zeros and poles of
different $\zeta$-factors cannot cancel. There is no cancellation among
local factors, since local factors can only have zeros and never
poles. Now consider cancellation among zeros of local factors and
$\zeta$-factors. We want to show that there are at most finitely many
cancellations. Suppose otherwise. Since there are only finitely many
local factors and finitely many $\zeta$-factors, an infinitude of
cancellation would imply that there are infinitely many cancellations
among one local factor and one $\zeta$-factor. The zeros of a local
factor are of the form $\xi_i+\frac{2k\pi i}{\log p}$, where
$\xi_i$ is the logarithm of one of the roots of $W(p, X)=0$ chosen
in such a way that $0\leq\Im\xi_i<\frac{2\pi i}{\log p}$. Since an
algebraic equation has only finitely many roots, an infinitude of
cancellations implies that for some complex number $\xi$ and
infinitely many integers $k$ we have $\zeta(n(\alpha+\frac{2k\pi
  i}{\log p})-m)=0$. Choose 4 different such integers $k_1, \ldots,
k_4$, and let $\rho_1, \ldots, \rho_4$ be the corresponding roots of
$\zeta$. Then we have $\rho_1-\rho_2=\frac{2(k_1-k_2)n\pi}{\log p}$,
$\rho_3-\rho_4=\frac{2(k_3-k_4)n\pi}{\log p}$, that is,
$(k_3-k_4)(\rho_1-\rho_2)=(k_1-k_2)(\rho_3-\rho_4)$, which gives a
linear relation among the zeros of $\zeta$, contradicting our assumption.
Hence, if the imaginary pars of the roots of $\zeta$ are $\Q$-linear
independent, the number of zeros and poles of $D$ in some domain
coincides with the sum of the numbers of zeros and poles of all
factors, up to some bounded error, and our claim follows.

\section{Examples}
In this section we give examples to show that our classification is
non-trivial in the sense that every case actually occurs.

\begin{ex}
 The sum $\sum_{n=1}^\infty\frac{\mu^2(n)\sigma(n)}{n^s}=
\frac{\zeta(s)\zeta(s-1)}{\zeta(2s)\zeta(2s-2)}$ corresponds to the
polynomial $W(X, Y)=(1+Y)(1+XY)$, while the sum 
$\sum_{n=1}^\infty\frac{\sigma(n)}{n^s}=
\zeta(s)\zeta(s-1)$ corresponds to the rational function $W(X,
Y)=\frac{1}{(1+Y)(1+XY)}$. 
\end{ex}

\begin{ex}

{\bf (a)}  Let $\Omega(n)$ be the number of prime divisors of $n$ counted with
multiplicity. Then
$\sum_{n=1}^\infty\frac{2^{\Omega(n)}}{n^s}=\prod_p(1+\frac{2}{p^s}(1-p^{-s})))$
corresponds to the rational function $W(X, Y)=1+\frac{2Y}{1-Y}$ with
main part $1+2Y$, which is not cyclotomic. 

\smallskip\noindent
{\bf (b)}  Let $G$ be the direct
product of three copies of the Heisenberg-group,
$a_n^\triangleleft(G)$ the number of normal subgroups of $G$ of index
$n$. Then $\zeta_G^\triangleleft(s) = \sum_{n=1}^\infty
\frac{a_n^\triangleleft(G)}{n^s}$ was computed by Taylor\cite{Taylor}
and can be written as a finite product
of $\zeta$-functions and an Euler-product of the form $\prod_p W(p,
p^{-s})$, where $W$ consists of 14 monomials and $\tilde{W}(X, Y) =
1-2X^{13}Y^8$, which is not cyclotomic.

\end{ex}
\begin{ex}
 {\bf (a)}  Let $G$ be the free nilpotent group of class two with three
generators. Then $\zeta_G^\triangleleft(s)$ can be written as a finite
product of $\zeta$-functions and the Euler-product $\prod_p W(p,
p^{-s})$, where
\[
W(X, Y) = 1+X^3Y^3 + X^4Y^3+X^6Y^5+X^7Y^5+X^{10}Y^8.
\]
We have $\tilde{W}(X, Y)=1+X^7Y^5$, which clearly does not divide $W$,
hence, while $\tilde{W}$ is cyclotomic, $W$ is not. Hence, $W$ is not
case 1 or 2. Theorem~\ref{thm:class} implies that $7/5$ is an
essential singularity of $\zeta_G^\triangleleft$. Du Sautoy and
Woodward\cite{book} showed that in fact the line $\Re\;s=7/5$ is the
natural boundary for $\zeta_G^\triangleleft$.

\smallskip\noindent
{\bf (b)}  Now consider the product
\begin{eqnarray*}
f(s) = \prod_p \Big(1+p^{-s} + p^{1-2s}\Big)
\end{eqnarray*}
Again, the polynomial $W(X, Y)=1+Y+XY^2$ is not cyclotomic, while
$\tilde{W}$ is cyclotomic. Again, Theorem~\ref{thm:class} implies that
$1/2$ is an obstructing point of $f$. However, the question
whether there exists another point on the line $\Re\;s=1/2$ which is an
obstructing point is essentially equivalent to the Riemann hypothesis. We
have 
\begin{eqnarray*}
f(s) & = & \frac{\zeta(s)\zeta(2s-1)\zeta(3s-1)}{\zeta(2s)\zeta(4s-2)} R(s)\\
&&\times\;\prod_{m\geq1} \frac{\zeta((4m+1)s-2m)}
{\zeta((4m+3)s-2m-1)\zeta((8m+2)s-4m)},
\end{eqnarray*}
hence, if $\zeta$ has only finitely many zeros off the line $1/2+it$,
then the right hand side has only finitely many zeros in the domain
$\Re\;s>1/2$, $|\Im\;s|>\epsilon$, hence, $1/2$ is the unique obstructing
point on this line. On the other hand, if $\zeta(s)$ has infinitely
many non-real zeros off the line $1/2+it$, then every point on this
line is an obstructing point for $f$ (confer \cite{BSP}).

Hence, while for some polynomials the natural boundary can be
determined, we do not expect any general progress in this case.
\end{ex}
\begin{ex}
{\bf (a)}  The local
zeta function associated to the algebraic group $\mathcal G$  is defined as 
$$
Z_p(\mathcal G, s)=\int_{\mathcal G_p^+} \mid  \det(g)\mid_p^{-s}d\mu
$$
where $\mathcal G_p^+=G(\Q_p)\cap M_n (\Z_p)$ , $\mid.\mid_p$ denotes the 
p-adic valuation
and $\mu$ is the normalised Haar measure on $ \mathcal G(\Z_p)$.
In particular
 the zeta function associated to the group $\mathcal G=GSp_6$\cite{Igusa}
is given by
\begin{eqnarray*}
Z(s/3) = \zeta(s)\zeta(s-3)\zeta(s-5)\zeta(s-6)\prod_p
\Big(1+p^{1-s}+p^{2-s}+p^{3-s}+p^{4-s}+p^{5-2s}\Big).
\end{eqnarray*}
The polynomial 
$$W(X, Y)=1+(X+X^2+X^3+X^4)Y+X^5Y^2$$ 
satisfies the relation
$\tilde{W}(X, Y)=1+X^4Y$, that is, $\tilde{W}$ is cyclotomic, while $W$
is not. Du Sautoy an Grunewald\cite{ghost} showed that in the
cyclotomic expansion of $W$ there are only finitely many $(n, m)$ with
$c_{n, m}\neq 0$ and $\frac{n+1}{m}>4$, and that $W(p, p^{-s})=0$ has
solutions with $\Re\;s>4$ for infinitely many primes, hence, $W$ is an
example of type 4, and $Z(s/3)$ has the natural boundary $\Re\;s=4$.

\smallskip\noindent
{\bf (b)} Let $V$ be the cubic variety $x_1x_2x_3=x_4^3$, $U$ be the open subset
$\{\vec{x}\in V\cup\Z^4: x_4\neq 0\}$, $H$ the usual height
function. De la Bret\`eche and Sir Swynnerton-Dyer\cite{2nob} showed
that $Z(s)=\sum_{x\in U} H(x)^{-s}$ can be written as the product of
finitely many $\zeta$-functions, a function holomorphic in a
half-plane strictly larger than $\Re\;s>3/4$, and a function having an
Euler-product corresponding to the rational function
\[
W(X, Y) = 1 + (1-X^3Y)(X^6Y^{-2}+X^5Y^{-1}+X^4+X^2Y^2+XY^3+Y^4)-X^9Y^3.
\]
They showed that in the cyclotomic expansion of this function there
occur only finitely many terms $c_{n, m}X^nY^m$ with $c_{n, m}\neq 0$
and $\frac{n+1}{m}>\frac{3}{4}$, and all but finitely many local
factors have a zero to the right of $\Re\;s=3/4$, hence, $\Re\;s=3/4$
is the natural boundary of $Z(s)$.
\end{ex}
\begin{ex}
 Let $J_2(n)$ be the Jacobsthal-function, i.e. $J_2(n)=\#\{(x,
y):1\leq x, y\leq n, (x, y, n)=1\}$, and define $g(s)=\sum_{n\geq
  1}\frac{\mu(n)J_2(n)}{n^2}$. Since $J_2$ is multiplicative, $g$ has
an Euler-product, which can be computed to give
\begin{eqnarray*}
g(s) = \prod_p \Big(1+p^{-s} -p^{2-s}\Big).
\end{eqnarray*}
We have
\[
g(s) = \prod_p(1-p^{2-s})\prod_p(1+\frac{p^{-s}}{1-p^{2-s}}) =
\zeta(s-2)D^*(s), 
\]
say. For $\sigma=\Re\;s>2+\epsilon$ the Euler product for $D^*$ converges
uniformly, since
\[
\sum_p \big|\frac{p^{-s}}{1-p^{2-s}}\big| \leq \sum_p
\frac{p^{-\sigma}}{1-2^{2-\sigma}}\leq \frac{\zeta(2)}{\epsilon}.
\]
Hence, $D^*$ is holomorphic and non-zero in $\Re\;s>2$, that is, no
point on the line $\Re\;s=2$ is an obstructing point, that is,
Estermann's method cannot prove the existence of a single singularity
of this function.
\end{ex}
\section{Comparison of our classification with the classification of
  du Sautoy and Woodward}

In \cite{book}, du Sautoy and Woodward consider several classes of
polynomials for which they can prove Conjecture 1. Since their classes
do not coincide with the classes described in Theorem~\ref{thm:class},
we now describe how the two classifications compare. We will refer to the
classes described in Theorem~\ref{thm:class} as  `cases', while we will
continue
to refer to the polynomials of du Sautoy and Woodward by their original
appellation of  ` type'.

Polynomials of type I are polynomials $W$ such that $\tilde{W}$ is not
cyclotomic, this class coincides with polynomials in case (2).

Polynomials of type II are polynomials $W$ such that $\tilde{W}$ is
cyclotomic, there are only finitely many $c_{n, m}> 0$ with
$\frac{n+1}{m}>\beta$, and for infinitely many primes we have that
$W(p, p^{-s})$ has zeros to the right of $\beta$. This class contains
all polynomials in case (4), and all polynomials of type II fall under
case (3) or (4), but there are polynomials in case (3) which are not
of type II\@. For polynomials of type II they prove that the line
$\Re\;s=\beta$ is the natural boundary of meromorphic continuation of
$D$,  their result for polynomials therefore clearly supersedes
the relevant parts of Theorem~\ref{thm:class}. 

Polynomials of type III are polynomials $W$ as in type II, but there
are infinitely many pairs $n, m$ with $c_{n, m}>0$,
$\frac{n+1}{m}>\beta$. These polynomials fall under case (3), they
show under the Riemann hypothesis that $\Re\;s=\beta$ is a natural
boundary. For such polynomials the results are incomparable, our
results are unconditional, yet weaker. 

Polynomials of type IV are polynomials with infinitely many pairs $(n,
m)$ satisfying $c_{n,m}\neq 0$ and $\frac{n+1/2}{m}>\beta$, and such
that with the exception of finitely many $p$ there are no local zeros
to the right of $\Re\;s=\beta$. For such 
polynomials du Sautoy and Woodward show that $\Re\;s=\beta$ is the
natural boundary, if the imaginary parts of the zeros of $\zeta$ are
$\Q$-linearly independent. All polynomials of type IV fall under case
(3), again, the results are incomparable.

Polynomials of type V are polynomials $W$ such that $\tilde{W}$ is
cyclotomic, with the exception of finitely many $p$ there are no local
zeros to the right of $\beta$, and 
there are only finitely many pairs $n, m$ with $c_{n, m}\neq 0$ and
$\frac{n+1}{m}\geq\beta$. This correspond to case (5).

Polynomials of type VI are polynomials $W$ such that $\tilde{W}$ is
cyclotomic, with the exception of finitely many $p$ there are no local
zeros to the right of $\beta$, there are infinitely many pairs $(n,
m)$ with $c_{n, m}\neq 0$ and $\frac{n+1}{m}>\beta$, only finitely
many of which satisfy $\frac{n+1/2}{m}>\beta$. These fall under case
(3).

Case (1) does not occur in their classification as it is justly
regarded as trivial.

\section{Comparison with the multivariable case}

The object of our study has been the Dirichlet-series $D(s)=\prod W(p,
p^{-s})$. This will be called the
$1\frac{1}{2}$-variable problem since the polynomial has two variables, but the
Dirichlet-series depends on only one complex variable. If  the
coefficients of the above series have some arithemetical meaning,
and this meaning translates into a statement on each monomial of $W$, then
the Dirichlet-series $D(s_1, s_2)=\prod_p W(p^{-s_1}, p^{-s_2})$ retains more
information, and it could be fruitful to consider this function instead. Of
course, the gain in information could be at the risk of the technical
difficulties introduced by considering several variables. However, here we
 show that the multivariable problem is actually easier then the original
question of $1\frac{1}{2}$-variables. 

Where there is no explicit reference to $p$, the problem of a natural boundary
was completely solved by Essouabri, Lichtin and the first named author\cite{Forum}. 
\begin{Theo}
Let $W\in\Z[X_1, \ldots, X_k]$ be a polynomial satisfying $W(0, \ldots, 0) =
1$. Set $D(s_1, \ldots, s_k)=\prod_p W(p^{-s_1}, \ldots, p^{-s_k})$. Then $D$
can be meromorphically continued to the whole complex plane if and only if $W$
is cyclotomic. If it cannot be continued to the whole complex plane, then its
maximal domain of meromorphic continuation is the intersection of a finite
number of effectively computable half-spaces. The bounding hyper-plane of each
of these half-spaces passes through the origin.
\end{Theo}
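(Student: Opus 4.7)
The plan is to reduce the multivariable Euler product to a (possibly infinite) product of shifted Riemann $\zeta$-functions by a cyclotomic decomposition, and then use the prime zeta function $P(\sigma)=\sum_p p^{-\sigma}$ to pin down the maximal domain of continuation.

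First I would generalise the procedure of Section 3 to $k$ variables: since $W(0,\dots,0)=1$, every monomial of $W-1$ is divisible by some $X_i$, and iteratively peeling off the extremal points of the support $A_W\subseteq\N^k$ yields a convergent (in the formal power series topology) expansion
\[
W(X_1,\dots,X_k)=\prod_{\alpha\in\N^k\setminus\{0\}}\bigl(1-X_1^{\alpha_1}\cdots X_k^{\alpha_k}\bigr)^{c_\alpha},
\]
with unique integer exponents $c_\alpha$. Writing $\alpha\cdot s:=\alpha_1 s_1+\cdots+\alpha_k s_k$ and substituting $X_i=p^{-s_i}$ gives formally $D(s_1,\dots,s_k)=\prod_\alpha\zeta(\alpha\cdot s)^{-c_\alpha}$. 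If $W$ is cyclotomic the right-hand side is a finite product and is therefore meromorphic on all of $\C^k$, giving the easy direction of the equivalence.

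For the non-cyclotomic case let $C=\{\alpha:c_\alpha\neq 0\}$ (now infinite) and pass to logarithms:
\[
\log D(s) = -\sum_{\alpha\in C}c_\alpha\sum_{n\ge 1}\frac{P(n\alpha\cdot s)}{n}.
\]
By the $k$-variable analogue of Lemma~\ref{Lem:elim}, $C$ is contained in the convex cone generated by the finitely many extremal directions $\alpha_1,\dots,\alpha_N$ coming from the Newton polytope of $W$. Combining this with the Mobius inversion $P(\sigma)=\sum_{n\ge 1}\frac{\mu(n)}{n}\log\zeta(n\sigma)$, which meromorphically continues $P$ into $\Re\sigma>0$, one obtains a meromorphic continuation of $\log D$, and hence of $D$, to the open polyhedral region
\[
\Omega=\bigcap_{i=1}^N\{s\in\C^k:\Re(\alpha_i\cdot s)>0\},
\]
whose bounding hyperplanes pass through the origin and are effectively read off from the monomials of $W$.

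The main obstacle is maximality, i.e.\ showing that each of these hyperplanes really is a natural boundary and that the continuation cannot be pushed further. The key input is the classical fact that $\Re\sigma=0$ is a natural boundary for $P$, because the non-trivial zeros $\rho$ of $\zeta$ produce singularities of $\log\zeta(n\sigma)$ at $\sigma=\rho/n$ which accumulate on the imaginary axis. Since $W$ is not cyclotomic, at least one extremal $c_{\alpha_i}$ is non-zero, and the corresponding term $\sum_n c_{\alpha_i} P(n\alpha_i\cdot s)/n$ contributes singularities clustering along $\Re(\alpha_i\cdot s)=0$. It then remains to argue that these singularities are not cancelled by contributions from the other extremal rays. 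Here the multivariable setting is essentially easier than Theorem~\ref{thm:class}: the hyperplanes $\Re(\alpha_i\cdot s)=0$ are distinct linear subspaces of $\C^k$, so singularities accumulating on one of them cannot be matched by singularities accumulating on another, and the delicate non-cancellation arguments of Lemmas~\ref{Lem:alggeo} and \ref{Lem:nocancel} needed in the $1\tfrac{1}{2}$-variable case are circumvented. This is precisely why the multivariable problem admits an unconditional and effective resolution, whereas the $1\tfrac{1}{2}$-variable classification of Theorem~\ref{thm:class} retains the five delicate cases described above.
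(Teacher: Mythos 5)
First, a point of context: the paper does not prove this theorem at all --- it is quoted verbatim from \cite{Forum} (Bhowmik--Essouabri--Lichtin), so there is no in-paper proof to match; your outline follows the general philosophy of that reference (cyclotomic expansion, dual cone, boundary hyperplanes through the origin), but the two hard steps are asserted rather than proved, and one identification is wrong as stated. The continuation into $\Omega$ does not follow from termwise continuation of $\log D(s)=-\sum_\alpha c_\alpha\sum_{n\ge 1}P(n\,\alpha\cdot s)/n$: the exponents $c_\alpha$ of the cyclotomic expansion grow in general exponentially in $|\alpha|$ (already for $1-2X$ one has $c_m\sim 2^m/m$), so the rearranged series converges only well inside the cone, essentially on the domain of absolute convergence of the Euler product, and continuing each term separately says nothing about the sum beyond that region; for $\prod_p(1-2p^{-s})$ the $\zeta$-product converges only for $\Re\,s>1$ although the function continues to $\Re\,s>0$. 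The genuine argument (as in Section 3 here, and in \cite{Forum}) exhausts $\Omega$ by subcones, extracting on each subcone \emph{finitely many} $\zeta$-factors and showing the Euler product of the remaining power series converges there; this also avoids the fact that $P$ has logarithmic branch points at the points $\rho/n$, so a continuation of $\log D$ built from $P$ is not yet a \emph{meromorphic} continuation of $D$. Moreover the maximal domain is not the dual cone of the Newton polytope of $W$: multiplying a non-cyclotomic $W$ by $1-X_1$ adds the monomial $X_1$ (hence, in your description, the superfluous half-space $\Re\,s_1>0$) while only multiplying $D$ by $\zeta(s_1)^{-1}$; the half-spaces must be computed from the non-cyclotomic content, which is why the theorem says only that they are effectively computable.

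The more serious gap is maximality, which you dispose of in one sentence. Exponents $\alpha,2\alpha,\dots$ on a single extremal ray give $\zeta$-factors whose singular hyperplanes are all parallel to the same facet, so ``the hyperplanes are distinct'' does not separate them: coincidences such as $\rho/n=\rho'/n'$, or of zero hyperplanes with the pole hyperplanes $\alpha_0\cdot s=1/n$, must be excluded or shown harmless. Worse, the collinear exponents need not produce any accumulation at the facet: the edge polynomial of $W$ along a facet can be cyclotomic --- for $W=1+Y+XY^2$ the edge $1+XY^2$ is cyclotomic, so on the ray through $(1,2)$ only finitely many $c_\alpha$ are nonzero --- and then the singularities clustering at that facet have to come from exponents \emph{off} the ray, whose hyperplanes $\alpha\cdot s=1$ or $\alpha\cdot s=\rho$ tilt towards the facet. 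This is exactly the multivariable counterpart of the delicate cases (3)--(5) of Theorem~\ref{thm:class}, and it is the part of \cite{Forum} that carries the real content; it cannot be replaced by the remark that different facets are different. Finally, in several variables one must also argue that a family of singular hypersurfaces accumulating at the boundary forbids meromorphic continuation through \emph{any} boundary point; this is true (singular sets have codimension one) but is part of the proof, not a triviality. So your sketch reproduces the statement and the easy direction ($W$ cyclotomic $\Rightarrow$ finite product of shifted $\zeta$'s), but both the continuation up to the polyhedral boundary and the unconditional non-cancellation argument at the boundary are missing.
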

 
At first sight one may think that one can pass from the 2-dimensional by
fixing $s_1$, however, this destroys the structure of the problem, as is
demonstrated by the following.

\begin{ex}
\label{ex:2to1}
The Dirichlet-series $D(s_1, s_2) = \prod_p 1+(2-p^{-s_1})p^{-s_2}$  as a
function of two variables can be
meromorphically continued into the set $\{(s_1, s_2): \Re\;s_2>0,
\Re\;s_1+s_2>0\}$, and the boundary of this set is the natural boundary of
meromorphic continuation. If we fix $s_1$ with $\Re\;s_1\geq 0$, and view $D$
as a function of $s_1$, then $D$ can be continued to $\C$ if and only if
$s_1=0$. In every other case the line $\Re\;s_2=0$ is the natural boundary.
\end{ex}
\begin{proof}
The behaviour of $D(s_1, s_2)$ follows from \cite[Theorem 2]{Forum}. 
If we fix $s_1$,
then $1+(2-p^{-s_1})p^{-s_2}$ has zeros with relatively large real part,
provided that either $\Re s_1>0$, or $\Re s_1=0$ and $\Re p^{-s_1}<0$. In the
first case we can argue as in the case that $\tilde{W}$ is not cyclotomic. By
the prime number theorem for short intervals we find that the number of prime
numbers $p<x$ satisfying $\Re p^{-s_1}<0$ is greater than $c\frac{x}{\log x}$, and we see
that we can again adapt the proof for the case $\tilde{W}$ non-cyclotomic. 
\end{proof}
In other words, the natural boundary for the $1\frac{1}{2}$-variable problem
is the same as for the 2-variable problem, with one exception, in which the
$1\frac{1}{2}$-variable problem collapses to a 1-variable problem, and in
which case the Euler-product becomes continuable beyond the 2-variable
boundary. 

It seems likely that this behaviour should be the prevalent one, it is less
clear what precisely ``this behaviour'' is. One quite strong possibility is
the following:

{\em Suppose that $D(s_1, s_2)=\prod_p W(p^{-s_1}, p^{-s_2})$ has a natural
  boundary at $\Re\;s_1=0$. Then there are only finitely many values $s_2$, for
  which the specialization $D(\cdot, s_2)$ is meromorphically continuable
  beyond $\Re\;s_1=0$.}

However, this statement is right now supported only by a general lack of
examples, and the fact that example~\ref{ex:2to1} looks quite natural,
so we do not dare a conjecture. However we believe that some progress in
this direction could be easier to obtain than directly handling
Conjecture~\ref{Con:Main}. In particular those cases, in which zeros of
$\zeta$ pose a serious threat for local zeros would become a lot easier since this
type of cancellation can only affect a countable number of values for $s_2$.

\begin{tabular}{ll}
Gautami Bhowmik, & Jan-Christoph Schlage-Puchta,\\
Universit\'e de Lille 1, & Albert-Ludwigs-Universit\"at,\\
Laboratoire Paul Painlev\'e, & Mathematisches Institut,\\
U.M.R. CNRS 8524,  & Eckerstr. 1,\\
  59655 Villeneuve d'Ascq Cedex, & 79104 Freiburg,\\
  France & Germany\\
bhowmik@math.univ-lille1.fr & jcp@math.uni-freiburg.de
\end{tabular}

\end{document}